\titleformat*{\section}{\normalsize\bfseries}
\titleformat*{\subsection}{\normalsize\bfseries}
\titleformat*{\subsubsection}{\normalsize\bfseries}
\titleformat*{\paragraph}{\normalsize\bfseries}
\titleformat*{\subparagraph}{\normalsize\bfseries}
\newcommand{\R}{\mathbb{R}}
\newcommand{\Q}{\mathbf{Q}}
\newcommand{\de}{\text{ d}}
\newcommand{\He}{\text{He}}
\newcommand{\re}{\text{Re}}
\newcommand{\A}{\textbf{A}}
\newcommand{\sA}{\mathcal{A}}
\newcommand{\B}{\textbf{B}}
\newcommand{\Tr}{\text{Tr}\,}
\newcommand{\n}{\mathbf{n}}
\newcommand{\m}{\mathbf{m}}
\newtheorem{lem}{Lemma}
\newtheorem{proposition}{Proposition}
\newtheorem{remark}{Remark}
\title{\Large{\textbf{Computing eigenfunctions of the multidimensional Ornstein--Uhlenbeck operator}}}
\author[1]{Benjamin J.~Zhang\footnote{Corresponding author. Email: \texttt{bjz@mit.edu}}}
\author[2]{Tuhin Sahai}
\author[1]{Youssef M.~Marzouk}
\affil[1]{\small{Center for Computational Science and Engineering, Massachusetts Institute of Technology}}
\affil[2]{\small{Raytheon Technologies Research Center}}
\date{\today}
\begin{document}

\maketitle

\begin{abstract}
We discuss approaches to computing eigenfunctions of the Ornstein--Uhlenbeck (OU) operator in more than two dimensions. While the spectrum of the OU operator and theoretical properties of its eigenfunctions have been well characterized in previous research, the practical computation of general eigenfunctions has not been resolved. We review special cases for which the eigenfunctions can be expressed exactly in terms of commonly used orthogonal polynomials. Then we present a tractable approach for computing the eigenfunctions in general cases and comment on its dimension dependence. 
\end{abstract}

\section{Introduction}
The Ornstein--Uhlenbeck (OU) operator naturally arises in many fields. In stochastic differential equations (SDEs), the OU operator is the generator of the Ornstein--Uhlenbeck semigroup, which describes the evolution of statistics OU \emph{processes}, which are linear time-homogeneous SDEs \cite{pavliotis2014stochastic}. Eigenfunctions of the OU operator also appear in Koopman operator analysis of linear stochastic dynamical systems, as the stochastic Koopman operator for linear SDEs
has the same eigenfunctions as the OU operator \cite{vcrnjaric2019koopman}. These eigenfunctions have been useful in perturbation analysis of Fokker--Planck equations for nonlinear SDEs \cite{leen2011perturbation}. Recently, the eigenfunctions have been shown to be useful in constructing importance sampling schemes for rare event simulation \cite{zhang2021koopman}. The OU process is also used to model dynamical phenomena in financial mathematics \cite{vasicek1977equilibrium,nicolato2003option} and neuroscience \cite{ricciardi1979ornstein,feng2009can}.

Properties of the spectrum and eigenfunctions of the OU operator have been thoroughly explored in the literature. For example, the spectrum has been computed exactly, and many theoretical properties of the eigenfunctions---such as the fact that they are polynomials and are complete in certain weighted $L^p$ spaces---have been established~\cite{metafune2002spectrum}. There are, however, applications in which one needs to directly work with the eigenfunctions \cite{leen2011perturbation,zhang2021koopman}. The exact form of the eigenfunctions has only been recorded in limited special cases, and a comprehensive approach to computing the eigenfunctions, in general, has not been found by the authors. In this note, we describe certain cases in which the multidimensional OU eigenfunctions can be represented compactly in terms of commonly used orthogonal polynomials. Then we outline a direct way of computing them in a more general setting. This note is targeted towards those who are looking for methods to \emph{exactly} compute the eigenfunctions of the OU operator for \emph{general} diagonalizable drift and diffusion matrices, in \emph{arbitrary dimensions}.

\section{Theory and special cases}

\subsection{Notation and problem setting }
Let $\A$ and $\B$ be $d\times d$ and $d\times r$ real-valued matrices, respectively, with $d \ge r$, and define $\Q = \frac{1}{2} \B\B^\top$, where $^\top$ denotes the matrix transpose. Below, $\overline{\lambda}$ will denote the complex conjugate, $^*$ will denote the conjugate transpose, and $\langle u,v\rangle = u^* v$ will be the inner product. Assume that the eigenvalues of $\A$ have strictly negative real parts, and that none of the left eigenvectors of $\A$ are contained in the kernel of $\B^\top$. We also assume that $\A$ is diagonalizable; $\B$ may be rank-deficient.\footnote{When $\B$ is rank-deficient, this leads to the case where the Ornstein--Uhlenbeck operator is hypoelliptic \cite{metafune2002spectrum}.} We study the computation of the eigenfunctions on $L^p(\nu)$ for $p>1$, where $\nu$ is the invariant probability measure associated with the linear system of the operator. The existence of a nondegenerate invariant measure $\nu$ is guaranteed by the assumptions on $\A$ and $\B$ \cite{metafune2002spectrum}. The OU operator $\sA$ is given by 
\begin{align}
    \sA \psi = \langle \A x, \nabla \psi \rangle + \Tr \Q \nabla^2 \psi = \sum_{i = 1}^d (\A x)_i \frac{\partial \psi}{\partial x_i } + \sum_{i ,j = 1}^d \Q_{ij} \frac{\partial ^2 \psi}{\partial x_i \partial x_j}.
\end{align}
In the context of stochastic differential equations, the OU operator is the infinitesimal generator of the OU process, which is a time-homogeneous linear SDE,
\begin{align}
  \de X_t = \A X_t  \de t + \B \de W_t \, ,
\end{align}
where $W_t$ is a standard $d$-dimensional Brownian motion. 

The spectrum of the Ornstein--Uhlenbeck operator and its associated semigroup has been well studied (for example, see \cite{metafune2002spectrum,bogachev2018ornstein,lunardi1997ornstein}). Previous research has characterized the eigenfunctions of the self-adjoint OU operator, which corresponds to the case when $\A$ is self-adjoint and shares the same eigenvectors as $\B$. In this case, the eigenfunctions are the tensorized Hermite polynomials \cite{pavliotis2014stochastic}. In $d=2$ dimensions, if $\A$ has only complex eigenvalues and is normal (i.e., $\A^\top\A= \A\A^\top$), the eigenfunctions are the so-called Hermite-Laguerre-It\^o (HLI) polynomials \cite{chen2014eigenfunctions}. In general the OU operator is not self-adjoint, so we cannot appeal to the spectral theory of self-adjoint operators to prove the existence of eigenvalues. Nevertheless, the seminal work of \cite{metafune2002spectrum} shows that, under mild conditions, the OU operator has a pure point spectrum in $L^p(\nu)$ for $1<p<\infty$, where $\nu$ is the stationary measure of the OU process. Moreover, \cite{metafune2002spectrum} shows that the eigenfunctions form a complete basis in $L^p(\nu)$ for $1<p<\infty$, the eigenfunctions are all polynomials, and that the eigenvalues and eigenfunctions are the same for all $1<p<\infty$. We summarize these facts by recalling the following propositions from \cite{metafune2002spectrum}.
\begin{proposition}[{\cite[Theorem 3.1]{metafune2002spectrum}}]
	Let $-\lambda_1,\ldots,-\lambda_l$ be the distinct eigenvalues of \textbf{A}, where $\lambda_k>0$ for all $k$. Then the spectrum of $\mathcal{A}$ is given by
	\begin{align*}
		\left\{-\sum_{k = 1}^l n_k \lambda_k: n_k \in \mathbb{N} \right\}.
	\end{align*}
	Moreover, the linear span of the eigenfunctions of $\mathcal{A}$ is dense in $L^p(\nu)$. 
	\label{prop1}
\end{proposition}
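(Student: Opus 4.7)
The plan is to reduce the eigenvalue problem to finite-dimensional linear algebra on polynomial subspaces, read off the spectrum from a block-triangular structure, and invoke density of polynomials in $L^p(\nu)$ for the second claim.

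First I would observe that $\sA$ leaves invariant the space $P_n$ of polynomials of degree at most $n$. Writing $\sA = \sA_0 + \mathcal{D}$ with $\sA_0 \psi := \langle \A x, \nabla\psi\rangle$ and $\mathcal{D}\psi := \Tr \Q \nabla^2\psi$, a direct check shows that $\sA_0$ preserves each subspace $H_k$ of homogeneous polynomials of degree $k$, while $\mathcal{D}$ maps $H_k$ into $H_{k-2}$. Consequently, in the graded decomposition $P_n = H_0 \oplus H_1 \oplus \cdots \oplus H_n$, the restriction $\sA|_{P_n}$ is block triangular with diagonal blocks $\sA_0|_{H_k}$, so its spectrum equals the union of the spectra of those diagonal blocks.

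Next I would compute the spectrum of each $\sA_0|_{H_N}$. Using the diagonalizability of $\A$, I would pass to coordinates $y = V^{-1}x$ in which $\A$ acts diagonally, with diagonal entries drawn (with multiplicity) from $\{-\lambda_1,\ldots,-\lambda_l\}$. Each monomial $y^\alpha$ is then an eigenfunction of $\sA_0$ with eigenvalue $-\sum_i \alpha_i \lambda_{k(i)}$, where $k(i)$ indexes the distinct eigenvalue sitting in the $i$-th diagonal slot; regrouping by distinct eigenvalues via $n_k := \sum_{i:\, k(i)=k}\alpha_i$ produces the spectrum $\{-\sum_k n_k \lambda_k : \sum_k n_k = N\}$ on $H_N$. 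Taking the union over $N$ recovers the set claimed in the statement. To rule out eigenvalues outside the polynomial subspaces, I would invoke the separately established fact that every $L^p(\nu)$-eigenfunction of $\sA$ is a polynomial and hence lies in some $P_n$.

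Finally, for the density claim, polynomials are dense in $L^p(\nu)$ for $1<p<\infty$ because $\nu$ is Gaussian, so it suffices to show that every polynomial lies in the linear span of the eigenfunctions; equivalently, $\sA|_{P_n}$ is diagonalizable for each $n$. I would proceed inductively on $n$: given an eigenvector $p \in H_n$ of $\sA_0|_{H_n}$ with eigenvalue $\lambda$, construct an $\sA$-eigenfunction of the form $\psi = p + q_{n-2} + q_{n-4} + \cdots$ by solving the triangular cascade $(\lambda I - \sA_0)|_{H_k}\, q_k = \mathcal{D}\, q_{k+2}$ for $k = n-2, n-4, \ldots$. I expect the main obstacle to be resonance, where $\lambda$ also lies in the spectrum of $\sA_0|_{H_k}$ for some $k < n$; then $(\lambda I - \sA_0)|_{H_k}$ is not invertible and one must verify that $\mathcal{D}\, q_{k+2}$ lies in its range. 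This compatibility is where the nondegeneracy assumption---that no left eigenvector of $\A$ lies in $\ker \B^\top$---plays its decisive role, ensuring that $\sA|_{P_n}$ is genuinely diagonalizable and that its eigenfunctions exhaust $P_n$.
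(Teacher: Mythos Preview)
This proposition is not proved in the paper; it is quoted from \cite{metafune2002spectrum} (their Theorem~3.1) as background, with no argument supplied. There is therefore no in-paper proof to compare your proposal against.

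For what it is worth, your spectrum argument---block-triangularizing $\sA = \sA_0 + \mathcal{D}$ on the grading $P_n = \bigoplus_k H_k$, reading off the eigenvalues of $\sA_0|_{H_k}$ in coordinates diagonalizing $\A$, and invoking the companion polynomiality result (Proposition~2 here) to exclude nonpolynomial eigenfunctions---is essentially the route taken in the original reference. The density portion, however, has a real gap that you flag but do not close. Resonances \emph{do} occur generically (e.g.\ with $\lambda_1=1$, $\lambda_2=3$ the value $-3$ appears on both $H_1$ and $H_3$), and your claim that the hypothesis ``no left eigenvector of $\A$ lies in $\ker\B^\top$'' repairs the range obstruction is not right as stated: that hypothesis is a hypoellipticity/controllability condition ensuring a nondegenerate invariant measure $\nu$, not a mechanism forcing $\mathcal{D}\,q_{k+2}$ into the range of $(\lambda I - \sA_0)|_{H_k}$. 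The argument in \cite{metafune2002spectrum} for density is more delicate and does not proceed by establishing that each $\sA|_{P_n}$ is diagonalizable via your cascade.
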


\begin{proposition}[{\cite[Proposition 3.1]{metafune2002spectrum}}]
	Suppose that $u$ is in the domain of $\mathcal{A}$ and satisfies $(\gamma-\mathcal{A})u = 0$ for some $\gamma\in\mathbb{C}$. Then $u$ is a polynomial of degree less than or equal to $|\re(\gamma)/s(A)|$, where $s(A) = \sup_k\{\re(\lambda_k)\}$. That is, the eigenfunctions of the OU operator are polynomials. 
\end{proposition}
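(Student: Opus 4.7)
My plan is to show directly that any $L^p(\nu)$-eigenfunction $u$ of $\sA$ must be smooth, and then use a commutator relation that turns each partial derivative of $u$ into a formal eigenfunction whose eigenvalue is pushed into the open right half-plane once the derivative order is large enough. Since the spectrum of $\sA$ on $L^p(\nu)$ is contained in $\{\re(\cdot)\le 0\}$ (because $T_t$ is a contraction on $L^p(\nu)$), this forces the high-order derivatives of $u$ to vanish and yields the polynomial degree bound.

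For the smoothness step, I would use the Mehler representation
\begin{align*}
(T_t u)(x) = \Ex\bigl[u(e^{t\A}x + \xi_t)\bigr],
\end{align*}
where $\xi_t$ is the mean-zero Gaussian with covariance $\Sigma_t = \int_0^t e^{s\A}\B\B^\top e^{s\A^\top}\,\de s$. The standing assumption that no left eigenvector of $\A$ lies in $\ker\B^\top$ is precisely the Kalman (H\"ormander) condition, which guarantees that $\Sigma_t$ is strictly positive definite for every $t>0$. The density of $\xi_t$ is therefore smooth and rapidly decaying, so the identity $u = e^{-t\gamma}T_t u$ promotes $u$ to $C^\infty$.

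For the commutator step, I would diagonalize $\A = V\Lambda V^{-1}$ with $\Lambda = \mathrm{diag}(\mu_1,\dots,\mu_d)$ and pass to coordinates $y = V^{-1}x$. The drift becomes $\sum_i \mu_i y_i\partial_{y_i}$, and a direct calculation gives the commutator $[\partial_{y_j},\tilde{\sA}] = \mu_j\partial_{y_j}$ (the diffusion term has constant coefficients in $y$ and contributes nothing). Iterating this identity against $\tilde{\sA}\tilde u = \gamma\tilde u$ yields
\begin{align*}
\tilde{\sA}\bigl(\partial_y^\alpha \tilde u\bigr) = \Bigl(\gamma - \sum_i \alpha_i \mu_i\Bigr)\,\partial_y^\alpha \tilde u
\end{align*}
for every multi-index $\alpha$. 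Since each $\mu_i$ is an eigenvalue of $\A$, $\re(\mu_i)\le s(A)<0$, and hence $|\re(\mu_i)|\ge |s(A)|$. Therefore $\re(\gamma - \sum_i \alpha_i\mu_i) = \re(\gamma) + \sum_i \alpha_i |\re(\mu_i)| \ge \re(\gamma) + |\alpha|\,|s(A)|$, which is strictly positive as soon as $|\alpha| > |\re(\gamma)/s(A)|$. Such a number cannot be an $L^p(\nu)$-eigenvalue of $\sA$, so $\partial_y^\alpha \tilde u\equiv 0$ for all such $\alpha$, and $\tilde u$---hence $u$---is a polynomial of total degree at most $|\re(\gamma)/s(A)|$.

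The main obstacle is justifying that each $\partial_y^\alpha \tilde u$ actually lies in the $L^p(\nu)$-domain of $\tilde{\sA}$, so that the spectral exclusion argument can legitimately be applied rather than remaining a formal manipulation. Concretely, the pointwise smoothness obtained from the Mehler formula must be upgraded to weighted $L^p$ control of all derivatives of $u$. This should follow from weighted Sobolev regularity for the hypoelliptic operator $\sA$ combined with the Gaussian decay of $\nu$, which absorbs the polynomial growth generated by repeated differentiation; once that ingredient is in place, the rest of the argument is essentially algebraic.
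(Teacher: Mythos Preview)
The paper does not give its own proof of this proposition: it is quoted directly from Metafune et al.\ and used as a black box, so there is no in-paper argument to compare against.

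Your outline is in fact the standard argument from that reference. Smoothness via the Mehler kernel under the Kalman/H\"ormander condition, the commutator identity $[\partial_{y_j},\tilde{\sA}]=\mu_j\partial_{y_j}$ in diagonalizing coordinates, the resulting shift $\tilde{\sA}(\partial_y^\alpha\tilde u)=(\gamma-\sum_i\alpha_i\mu_i)\partial_y^\alpha\tilde u$, and the exclusion of eigenvalues with positive real part via contractivity of $T_t$ on $L^p(\nu)$ are exactly the ingredients used there. You have also correctly located the one nontrivial step: the formal eigenfunction $\partial_y^\alpha\tilde u$ must be shown to lie in the $L^p(\nu)$-domain of $\sA$, not merely to be $C^\infty$. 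In Metafune et al.\ this is obtained from a quantitative smoothing estimate showing that $T_t$ maps $L^p(\nu)$ into every $W^{k,p}(\nu)$, so eigenfunctions automatically sit in $\bigcap_k W^{k,p}(\nu)$; with that in hand the algebra you wrote goes through verbatim. One small notational caution: in this paper's convention the eigenvalues of $\A$ are written as $-\lambda_k$ with $\re(\lambda_k)>0$, so $s(A)=\sup_k\re(\lambda_k)$ as stated here is \emph{positive}, whereas your line ``$\re(\mu_i)\le s(A)<0$'' treats $s(A)$ as the spectral bound of $\A$ itself. Your computation is internally consistent with the latter reading, but be aware of the sign mismatch with the statement as written.
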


In \cite{leen2016eigenfunctions}, the authors describe the generalized form of the OU eigenfunctions in terms of ladder operators. Given a seed eigenfunction, repeated application of the ladder operators generates other eigenfunctions. While compact in its mathematical formulation, the approach is not easily amenable to practical computations. To make computing eigenfunctions tractable, we represent the OU operator as a matrix acting in 
some chosen basis of polynomials. Since it is known that the eigenfunctions of the OU operator are polynomials, an exact matrix representation of the OU operator on some finite dimensional vector space of polynomials is possible \cite{metafune2002spectrum}. 

While the pure point spectrum of the OU operator on $L^p(\nu)$ spaces with $p > 1$ is known explicitly, there is no explicit expression for the eigenfunctions in general. In \cite{metafune2002spectrum}, the authors showed that for $p>1$ the spectrum of the OU operator is the same as that of 
\begin{align}
	\mathcal{L}\psi  \coloneqq \langle x,\A^\top \nabla  \psi\rangle = \sum_{k = 1}^d x_k \left(\A^\top\nabla \psi \right)_k,
\end{align} regardless of the form of the diffusion term. In Section \ref{sec:general}, we will show how the eigenfunctions of $\mathcal{L}$ in fact comprise a judicious choice of basis for computing the eigenfunctions in general. The following lemma will be useful later when converting the OU eigenvalue problem into a matrix eigenvalue problem. 
\begin{lem}
  Let $\A\in\R^{d\times d}$ be diagonalizable and full rank. Let $f_i$ be a left eigenvector of $\A$ with eigenvalue $-\lambda_k$, i.e., $f_k^* \A = -\lambda_k f_k^*$. Let $\mathbf{n} \in \mathbb{N}_0^d$ be a $d$-dimensional multi-index of nonnegative integers. The eigenfunctions of the operator $\mathcal{L}\psi = \langle x, \A^\top \nabla \psi\rangle$ are
\begin{align}
    \psi_\n(x) := \prod_{k = 1}^d \psi_{n_k}(x) =  \prod_{k = 1}^d \langle x, f_k\rangle ^{n_k}
    \label{eq:basisfuncts}
\end{align}
with eigenvalues
\begin{align}
    \mu_{\mathbf{n}} = -\sum_{k = 1}^d n_k \lambda_k.
\end{align}
\label{lem}
\end{lem}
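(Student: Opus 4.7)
The plan is to reduce the multi-index statement to the single-index computation $\mathcal{L}\varphi_k = -\lambda_k\varphi_k$ for the linear functions $\varphi_k(x) := \langle x, f_k\rangle$, and then propagate it through the product $\psi_\mathbf{n} = \prod_k \varphi_k^{n_k}$ using the fact that $\mathcal{L}$ is a first-order differential operator, hence a derivation.

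First I would rewrite $\mathcal{L}\psi = \langle x,\mathbf{A}^\top \nabla\psi\rangle = (\mathbf{A}x)^\top\nabla\psi = \sum_i (\mathbf{A}x)_i\,\partial_i\psi$ to exhibit it as a vector field on $\R^d$ with linear coefficients. From this form the Leibniz rule $\mathcal{L}(fg) = f\,\mathcal{L}g + g\,\mathcal{L}f$ is immediate. Next I would verify the linear case: $\nabla\varphi_k = f_k$ (taking gradients componentwise), so $\mathcal{L}\varphi_k = x^\top \mathbf{A}^\top f_k$. The left-eigenvector identity $f_k^*\mathbf{A} = -\lambda_k f_k^*$, after taking conjugate transpose and using $\mathbf{A}^* = \mathbf{A}^\top$ (since $\mathbf{A}$ is real), yields $\mathbf{A}^\top f_k = -\lambda_k f_k$ up to the usual complex-conjugation bookkeeping, giving $\mathcal{L}\varphi_k = -\lambda_k\varphi_k$. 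This single-index identity is the only place where the hypothesis on $\mathbf{A}$ is actually used.

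Then I would iterate the Leibniz rule by a straightforward induction on $|\mathbf{n}| = \sum_k n_k$: applying the derivation property to $\varphi_k \cdot \varphi_k^{n_k-1}\prod_{j\ne k}\varphi_j^{n_j}$ and collecting terms gives
\begin{align*}
\mathcal{L}\psi_\mathbf{n} = \sum_{k=1}^d n_k\,\varphi_k^{n_k-1}\Bigl(\prod_{j\ne k}\varphi_j^{n_j}\Bigr)\,\mathcal{L}\varphi_k = -\Bigl(\sum_{k=1}^d n_k\lambda_k\Bigr)\psi_\mathbf{n},
\end{align*}
which is the claimed eigenvalue relation. Non-triviality of $\psi_\mathbf{n}$ follows from the diagonalizability of $\mathbf{A}$: a full set of $d$ linearly independent left eigenvectors $f_k$ makes $\{\varphi_1,\ldots,\varphi_d\}$ a basis of linear polynomials, so each monomial $\prod_k\varphi_k^{n_k}$ is a nonzero polynomial.

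The only real obstacle is the complex-conjugation bookkeeping, since for complex $\lambda_k$ the literal identity $f_k^*\mathbf{A} = -\lambda_k f_k^*$ transposes to $\mathbf{A}^\top \overline{f_k} = -\lambda_k\overline{f_k}$, so one must be careful about whether $f_k$ or $\overline{f_k}$ enters the definition of $\varphi_k$ and about which side of the Hermitian pairing $\langle\cdot,\cdot\rangle$ is anti-linear. Once that convention is fixed consistently, the remaining steps are mechanical applications of the chain and product rules, and the proof is essentially a one-line calculation modulo the induction.
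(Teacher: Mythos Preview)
Your proposal is correct and follows essentially the same approach as the paper: both compute $\mathcal{L}\psi_\mathbf{n}$ by applying the product rule to $\prod_k \langle x,f_k\rangle^{n_k}$ and then invoking the eigenvector identity $\mathbf{A}^\top f_k = -\lambda_k f_k$ (up to conjugation). The paper simply writes out the chain of equalities in one block, whereas you factor the argument into ``$\mathcal{L}$ is a derivation'' plus the base case $\mathcal{L}\varphi_k = -\lambda_k\varphi_k$; these are the same computation. Your remarks on non-triviality and on the conjugation bookkeeping are apt additions---the paper's proof silently uses $\langle x,\mathbf{A}^\top f_j\rangle = -\lambda_j\langle x,f_j\rangle$, which strictly requires the convention to be sorted out---but they do not change the overall strategy.
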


\begin{proof}
	Observe that
	\begin{align*}
		\left\langle x, \A^\top \nabla \prod_{k  =1}^d \psi_{n_k}(x) \right\rangle &=\left\langle  x,\A^\top\sum_{j = 1}^d n_j\langle x,f_j\rangle^{n_j-1} f_j \prod_{k \neq j}^d \langle x,f_k\rangle^{n_k} \right\rangle \\
		& = \sum_{j = 1}^d n_j \langle x,\A^\top f_j \rangle\langle x,f_j \rangle^{n_j-1} \prod_{k\neq j}^d \langle x,f_k\rangle^{n_k} \\
		& = \sum_{j = 1}^d -n_j \lambda_j \prod_{k = 1}^d \langle x,f_k\rangle ^{n_k} \\
		& = -\left(\sum_{j = 1}^d n_j \lambda_j \right)\psi_{\n}(x).
	\end{align*}
\end{proof}

\subsection{Special cases}
The eigenfunctions of $\sA$ are well-known for certain special cases. We outline some of these cases here.
\subsubsection{$\A$ and $\B$ are self-adjoint and simultaneously diagonalizable}
\label{sec:Aselfadjoint}
Here we study the case where $\A$ and $\B$ are self-adjoint and simultaneously diagonalizable. Then the eigenvalue problem is decomposable into $d$ one-dimensional eigenvalue problems, each of which is a Hermite differential equation. The relationship between the Hermite polynomials and the OU operator with $\A = \B = \mathbf{I}$ has been well-studied (see, e.g., \cite{pavliotis2014stochastic, bogachev2018ornstein, lunardi1997ornstein}  and the references therein). The extension to the present scenario is straightforward.  The eigenvalues of $\A$ are real and the eigenvectors are orthogonal.  Suppose we have $\A e_k = -\lambda_k e_k$ and $\B e_k = \sigma_k e_k$ for $\lambda_k,\sigma_k >0$, with $\langle e_j,e_k \rangle = \delta_{jk}$. We first show that univariate Hermite polynomials defined in the direction of each of the eigenvectors are eigenfunctions. That is, we make the \emph{ansatz} that
\begin{align}
	\phi_k(x) = g(\langle x,e_k\rangle)
\end{align}
and show that $g$ can be expressed in terms of a Hermite polynomial. The gradient and Hessian of this function are
\begin{align}
	\nabla \phi_k(x) = g'(\langle x,e_k \rangle )e_k, \,\,\,\, \nabla^2 \phi_k(x) = g''(\langle x,e_k\rangle ) e_ke_k^\top,
\end{align}
so the OU operator applied to $\phi_k(x)$ yields
\begin{align*}
	\mathcal{A}\phi_k(x) = \langle x,\A^\top e_k\rangle g'(\langle x,e_k\rangle) + \frac{1}{2} \Tr \left[\B\B^\top e_ke_k^\top \right] g''(\langle x,e_k\rangle).
\end{align*}
This yields the eigenvalue problem,
\begin{align}
	-\lambda_k \langle x,e_k\rangle g'(\langle x,e_k\rangle) + \frac{1}{2}\sigma^2_k   g''(\langle x,e_k\rangle) = \mu_k g(\langle x,e_k\rangle). 
\end{align}
Recall that the probabilist's Hermite polynomials $\He_n(x)$ solve the Hermite differential equation $-x \He_n'(x) + \He_n''(x) = -n \He_n(x)$. Therefore, notice that if $g(\langle x,e_k\rangle) = \He_{n_k}\left( \sqrt{\frac{2\lambda_k}{\sigma^2_k}}\langle x,e_k\rangle\right)$ for some $n_k\in \mathbb{N}_0$, we then have
\begin{align}
 	-\lambda_k \sqrt{\frac{2\lambda_k}{\sigma^2_k}}\langle x,e_k\rangle \He_{n_k}'\left(\sqrt{\frac{2\lambda_k}{\sigma^2_k}}\langle x,e_k\rangle \right) + \lambda_k \He_{n_k}''\left(\sqrt{\frac{2\lambda_k}{\sigma^2_k} } \langle x,e_k\rangle \right) = -n_k \lambda_k \He_{n_k}\left( \sqrt{\frac{2\lambda_k}{\sigma^2_k} } \langle x,e_k\rangle\right). \label{eq:hermitede}
 \end{align} 
 In the next section, we will see that products of different univariate eigenfunctions defined in the directions of the eigenvectors of $\A$ are eigenfunctions of $\mathcal{A}$. Specifically, 
 \begin{align}
 	\phi_{\mathbf{n}}(x) = \prod_{k = 1}^d \phi_{n_k}(x) = \prod_{k = 1}^d \He_{n_k}\left(\sqrt{\frac{2\lambda_k}{\sigma_k^2}} \langle x,e_k\rangle \right),
 \end{align}
 is an eigenfunction with eigenvalue $\mu_\n = -\sum_{k = 1}^d n_k \lambda_k$.

\subsubsection{$\A$ is normal and $\A, \B$ are simultaneously diagonalizable}
Now we consider the case where $\A$ is normal, i.e., $\A\A^\top = \A^\top \A$, but not necessarily self-adjoint. For this case, it is possible for $\A$ to have complex eigenvalues. In \cite{chen2014eigenfunctions}, for an OU operator with
\begin{align*}
	\A = \begin{bmatrix}
		-a & b \\ -b & -a 
	\end{bmatrix}, \text{ and } \B = \sigma\mathbf{I},
\end{align*}
the OU eigenfunctions are found to be the Hermite-Laguerre-It\^o (HLI) polynomials. The HLI polynomials are
\begin{align*}
	J_{m,n}(z,\bar{z};\rho) = \begin{cases}
		(-1)^n n! z^{m-n} L_n^{m-n}(z\bar{z},\rho), \;\;\;\; m\ge n  \\
		(-1)^m m! \bar{z}^{n-m} L_m^{n-m}(z\bar{z},\rho),\;\;\;\; m<n
	\end{cases}
\end{align*}
where $L_k^\alpha(z,\rho)$ are the generalized Laguerre polynomials, $\rho = \sigma^2/a$, and $z = x_1 + ix_2$. The OU eigenvalues in this case are $\mu_{m,n} = -(m+n)a + i(m-n)b$. 
\cite{chen2014eigenfunctions} also generalizes this result to $d$ dimensions, for even $d$, when the matrix $\A$ is normal but only has complex eigenvalues. Similar to the self-adjoint case, the eigenfunctions are simply products of the HLI polynomials on each of the eigenspaces.

We now explicitly write the eigenfunctions for general normal matrices $\A$ and for self-adjoint matrices $\B$ that share the same eigenspace as $\A$. The latter conditions imply that $\B$ is only has real eigenvalues. 
While the expression follows simply from previous results, to our knowledge no previous work has explicitly computed these eigenfunctions. 

When $\A$ has both real and complex eigenvalues, the eigenfunctions are products of Hermite and HLI polynomials. Suppose $\A$ has $l$ eigenspaces, with $l'$ real eigenspaces and $l-l'$ complex eigenspaces; that is, $\A$ has $l'$ real eigenvalues and $l-l'$ pairs of complex eigenvalues. Let $f_i$ denote a unit left eigenvector of $\A$ with eigenvalue $-\lambda_k$; that is, $f_k^* \A = -\lambda_k f_k^*$. Let the first $l'$ eigenvalues be real and the next $l-l'$ eigenvalues come in complex conjugate pairs. To be clear, for complex eigenvalues, we write $\lambda_k = a_k -ib_k$. Let $\B$ be such that $\B f_k = \sigma_kf_k$, where $\sigma_k >0$. Note that $l' + 2(l-l') = d$. Let $\n\in\mathbb{N}_0^d$ be a multi-index defined as $\n = (n_1,\ldots,n_{l'},n_{(l'+1)1},n_{(l'+1)1},\ldots,n_{l1},n_{l2})$. Then the eigenfunction of the corresponding OU operator is 
\begin{align}
    \phi_{\textbf{n}}= \prod_{k = 1}^l \phi_{n_k}(x) = \prod_{k = 1}^{l'} \He_{n_k}\left(\sqrt{\frac{2\lambda_k}{\sigma_k^2}}\langle x,f_k\rangle \right) \cdot \prod_{k = l'+1}^l J_{n_{k1},n_{k2}} \left(\sqrt{2}\langle x,f_k\rangle,\sqrt{2}\,\,\overline{\langle x,f_k\rangle}; \rho_k \right)
    \label{eq:normaleigfunc}
\end{align}
with eigenvalue $\mu_{\n} = \sum_{k = 1}^{l'} -n_k\lambda_k - \sum_{k = l'+1}^l\left[ (n_{k1}+n_{k2})a_k - i(n_{k1}-n_{k2}) b_k\right]$, and $\rho_k = \sigma_k^2/a_k.$ We show that Equation \eqref{eq:normaleigfunc} is indeed an OU eigenfunction. We first compute the following two expressions:
\begin{align*}
 \langle x,\A^\top\nabla \phi_{\n}(x) \rangle =& \sum_{k = 1}^l \langle x, \A^\top \nabla \phi_{n_k}(x) \rangle \prod_{j = 1,k\neq j}^l \phi_{n_j}(x) \\
 \frac{1}{2} \Tr \B\B^\top\nabla^2\phi_{\n} (x) =& \frac{1}{2}\sum_{k = 1}^l \Tr\B\B^\top\nabla^2 \phi_{n_k}(x) \prod_{j = 1,k\neq j}^l\phi_{n_j}(x)\\& + \sum_{k >j}^l \Tr\B\B^\top\nabla \phi_{n_k}(x)\nabla \phi_{n_j}(x)^\top \prod_{k' = 1,k'\neq k\neq j}^l \phi_{n_{k'}}(x).
 \end{align*}
The gradient of $\phi_{n_k}$ is 
\begin{align*}
	\nabla \phi_{n_k}(x) = \begin{dcases}
		\sqrt{\frac{2\lambda_k}{\sigma_k^2}}\He_{n_k}'\left(\sqrt{\frac{2\lambda_k}{\sigma^2_k}}\langle x,f_k\rangle \right)f_k \, \text{ if $1\le k\le l'$}\\
		\sqrt{2} \frac{\partial J_{n_{k1},n_{k2}}}{\partial z_k} f_k+ \sqrt{2} \frac{\partial J_{n_{k1},n_{k2}}}{\partial \overline{z}_k}\overline{f}_k \text{ if } l'+1\le k\le l,
	\end{dcases}
\end{align*}
where $z_k = \sqrt{2} \langle x,f_k\rangle$. 
The Hessian of $\phi_{n_k}$ is
\begin{align*}
 	\nabla^2\phi_{n_k} = \begin{dcases}
 		\frac{2\lambda_k}{\sigma_k^2}\He''_{n_k}\left(\sqrt{\frac{2\lambda_k}{\sigma_k^2}} \right) f_kf_k^\top \, \text{ if $1\le k\le l'$}\\
 		2\frac{\partial^2 J_{n_{k1},n_{k2}}}{\partial z_k^2} f_k f_k^\top+ 2 \frac{\partial^2 J_{n_{k1},n_{k2}}}{\partial \overline{z}_k^2}\overline{f}_k\overline{f}_k^\top + 4 \frac{\partial^2J_{n_{k1},n_{k2}}}{\partial z_k \partial \overline{z}_k}f_k \overline{f}_k^\top \text{ if } l'+1\le k \le l.
 	\end{dcases}
 \end{align*} 
By the normality of $\A$, the left eigenvectors of are orthonormal, so $\text{Tr}[\B \B^\top f_k f_k^*] = \langle \B^\top f_k, \B^\top f_k\rangle = \sigma_k^2$, and $\text{Tr}[\B \B^\top f_k f_j^*] = \langle \B^\top f_j, \B^\top f_k \rangle = \sigma_k\sigma_j \langle f_j,f_k\rangle = 0$. For cases where $f_k$ is complex, i.e., when $l'+1\le k \le l$, we also have $\text{Tr}[ \B \B^\top f_kf_k^\top] = \langle \B^\top \overline{f}_k, \B^\top f_k \rangle = \sigma_k^2 \langle \overline{f}_k,f_k\rangle = 0$.

Next observe that for $1\le k \le l'$, we have
\begin{align*}
	 \langle x,\A^\top\nabla \phi_{n_k}(x) \rangle + \frac{1}{2} \Tr \B\B^\top\nabla^2\phi_{n_k} (x) =& -\lambda_k \sqrt{\frac{2\lambda_k}{\sigma_k^2}}\langle x,f_k\rangle  \He_{n_k}'\left(\sqrt{\frac{2\lambda_k}{\sigma^2_k}}\langle x,e_k\rangle \right)\\& + \lambda_k \He_{n_k}''\left(\sqrt{\frac{2\lambda_k}{\sigma^2_k} } \langle x,e_k\rangle \right) \\ =& -n_k \lambda_k \He_{n_k}\left( \sqrt{\frac{2\lambda_k}{\sigma^2_k} } \langle x,e_k\rangle\right).
\end{align*}
For $l'+1 \le k \le l$,  we appeal to Proposition \ref{prop3} in the Appendix to obtain
\begin{align*}
	\langle x,\A^\top\nabla \phi_{n_k}(x) \rangle + \frac{1}{2} \Tr \B\B^\top\nabla^2\phi_{n_k} (x) = &-\sqrt{2}\overline{\lambda}_k \langle x, f_k \rangle\frac{\partial J_{n_{k1},n_{k2}}}{\partial z_k}  - \sqrt{2} \lambda_k \langle x, \overline{f}_k \rangle \frac{\partial J_{n_{k1},n_{k2}}}{\partial \overline{z}_k} 
	  \\&+ 2\sigma_k^2\frac{\partial^2J_{n_{k1},n_{k2}}}{\partial z \partial \overline{z}} \\
	  =& [-(n_{k1}+n_{k2})a_k + i(n_{k1}-n_{k2})b_k]J_{n_{k1},n_{k2}}.
\end{align*}
As for the cross terms, the normality of $\A$ implies that it is identically equal to zero. Therefore, we have $\mathcal{A} \phi_{\n}(x) = \mu_{\n}\phi_{\n}(x).$

 The above result also applies if $\B$ were a scalar multiple of an orthogonal matrix instead of being simultaneously diagonalizable with $\A$: i.e., when $\B = \sigma\mathbf{P}$ and $\mathbf{P}^\top\mathbf{P} = \mathbf{P}\mathbf{P}^\top = \mathbf{I}$.

\subsection{Applications of the special case eigenfunctions}
The eigenfunctions for the special cases above form complete orthonormal bases in $L^2(\nu)$, where $\nu$ is the invariant measure for the associated stochastic processes \cite{pavliotis2014stochastic,chen2014eigenfunctions}. The invariant density of $\nu$ is a normal distribution with mean zero and covariance 
  $\Sigma = \int_0^T e^{s\A}\B\B^\top e^{s\A^\top} \de s$ \cite{karatzas1998brownian}. Any function $g\in L^2(\nu)$ can then be expanded as an infinite sum of eigenfunctions, and the expansion coefficients can be expressed in terms of an integral with respect to the invariant measure: 
\begin{align}
 	g(x) = \sum_{\n} g_\n \phi_\n(x), \text{ where } g_\n = \int g(x) \phi_\n(x) \de \nu(x). 
 \end{align} 
The eigenfunctions of the $L^2(\nu)$-adjoint of the OU operator can also be found explicitly in this case. The adjoint operator is the Fokker--Planck operator of the stochastic process \cite{pavliotis2014stochastic}. The adjoint operator applied to a density $p\in L^2$ is
\begin{align}
	\sA^*p(x) = -\sum_{i = 1}^d \frac{\partial}{\partial x_i} \left[(\A x)_i p(x) \right] + \sum_{i,j = 1}^d \frac{\partial^2}{\partial x_i \partial x_j} \Q_{ij} p(x). 
\end{align}
The adjoint eigenfunctions are then $q_\n(x) = \phi_\n(x)p(x) $ with eigenvalue $\mu_\n$, where $p(x)$ is the invariant density. Solutions of the Kolmogorov backward equation (KBE) and Fokker--Planck equations can then also be expressed in terms of the eigenfunctions. For example, the KBE with terminal condition $g\in L^2(\nu)$: 
\begin{align*}
	\begin{dcases}
		\frac{\partial \Phi(t,x)}{\partial t} + \sA \Phi(t,x) &= 0 \\
		\Phi(T,x) &= g(x)
	\end{dcases}
\end{align*}
has solution
\begin{align*}
	\Phi(t,x) = \sum_{\n} g_\n e^{\mu_\n (T-t)}\phi_\n(x). 
\end{align*}
The solution of the Fokker--Planck equation can be obtained similarly. 

\section{Computation of general eigenfunctions}
\label{sec:general}
Here we turn to the case where we only assume $\A$ is diagonalizable. While in theory we know that the eigenfunctions can be expressed in closed form by polynomials, there is no simple way of expressing them in terms of classical orthogonal polynomials. Instead, we have found that a tractable approach for computing the eigenfunctions is to choose a basis of polynomials defined by the left eigenvectors of $\A$. Then, the action of the OU operator on the basis can be exactly represented by a matrix and the eigenfunctions are found by solving a matrix eigenvalue problem. We choose the basis $\{\psi_\n(x) \}_{\n \in \mathcal{I}}$, where the functions are defined in \eqref{eq:basisfuncts} and $\mathcal{I}\subset \mathbb{N}_0^d$ is some index set. This particular basis is chosen since its components are eigenfunctions of the first term of the OU operator. As we will see, this basis leads to a sparse matrix representation of the OU operator. Observe the following computation: 
\begin{align*}
    \mathcal{A} \psi_{\mathbf{n}}(x) &= \langle x,\A^\top \nabla\psi_{\mathbf{n}}\rangle + \Tr\left[ \mathbf{Q} \nabla^2 \psi_{\mathbf{n}}\right] \\& = \mu_{\mathbf{n}}\psi_{\mathbf{n}} + \Tr\left[ \mathbf{Q}\nabla^2 \psi_{\mathbf{n}}\right]. 
\end{align*}
We have that the trace term is
\begin{align*}
    \Tr\left[ \mathbf{Q}\nabla^2 \psi_{\mathbf{n}}\right]&= \sum_{k = 1}^d \Tr \left[\Q\nabla^2 \psi_{n_k}(x) \right] \prod_{j = 1,j \neq k}^d \psi_{n_j}(x) + 2\sum_{k = 1}^d \sum_{j = k+1}^d  \Tr\left[ \mathbf{Q}\nabla \psi_{n_k}\nabla \psi_{n_j}^\top \right] \prod_{l = 1,l\neq k, l\neq j}^d \psi_{n_l} \\
    & = \sum_{k = 1}^d \Tr\left[ \mathbf{Q}f_k f_k^\top\right] n_k(n_k-1) \langle x,f_k\rangle^{n_k-2}\prod_{j = 1, j \neq k}^d \psi_{n_j}(x) \\ &+ 2\sum_{k = 1}^d \sum_{j = k+1}^d \Tr\left[ \mathbf{Q} f_kf_j^\top\right] n_k n_j \langle x,f_k \rangle^{n_k-1} \langle x,f_j\rangle^{n_j-1} \prod_{l = 1,l\neq k, l\neq j}^d \psi_{n_l}.
\end{align*}
In more compact notation, we write
\begin{align}
	\sA \psi_{\n}(x)  = \mu_{\n}\psi_{\n}(x) + \sum_{k = 1}^d \langle \overline{f}_k ,\Q f_k \rangle n_k(n_k-1) \psi_{\m^{(k)}}(x) + 2\sum_{k = 1}^d \sum_{j = k+1}^d \langle \overline{f}_j,\Q f_k \rangle n_k n_j \psi_{\m^{(kj)}}(x)
	\label{lincomb}
\end{align}
where all entries of $\m^{(k)}$ and $\m^{(kj)}$ are equal to the corresponding entries of $\n$ except for $m^{(k)}_k = n_k -2$, and $m^{(kj)}_k = n_k-1$ and $m^{(kj)}_j = n_j -1$. Therefore, as long as $\m^{(k)}$ and $\m^{(kj)}$ are in $\mathcal{I}$, then $\sA\psi_{\n}(x)$ is contained in the span of $\{\psi_{\n}(x)\}_{\n\in \mathcal{I}}$. For practical computation, it is necessary to order the basis; lexicographical ordering is one obvious choice, but the choice is arbitrary and left to the user. Each basis function corresponds to an element of the standard basis, i.e., if there are $N = |\mathcal{I}|$ basis functions, then the $k$-th element of the basis corresponds to the vector in $\R^N$ with $1$ in the $k$-th entry and zero everywhere else. The matrix representation of $\sA$ is then $\mathbf{M} = [\sA \psi_{\n_1} \cdots \A \psi_{\n_R}].$

Suppose we are attempting to compute the eigenfunction with index $\n$. Based on \eqref{lincomb}, since the OU operator is a differential operator, $\mathcal{A}\psi_\n$ is itself a polynomial with index \emph{less than} $\n$ in the lexicographical ordering. This would require at most $N = \prod_{k = 1}^d (n_k+1)$ basis functions to span all the polynomials up to and including multi-index $\n$. The resulting matrix representation of $\mathbf{M}$ would then be an $N\times N$ matrix. However, \eqref{lincomb} implies that $\sA \psi_{\n}$ is dependent on at most $\frac{1}{2}(d^2+d+2)$ terms, which does not grow with the number of basis functions. Therefore, the resulting matrix is often quite sparse when many basis functions are considered. Solving the matrix eigenvalue problem would give \emph{all} of the eigenfunctions of $\sA$ with index up to and including $\n$.

Furthermore, if one only wishes to compute a single eigenfunction corresponding to index $\n$ (rather than all the eigenfunctions with total degree less than or equal to $\n$), then one does not need to include all the basis functions with index less than or equal to $\n$. For example, when $d = 2$ and we wish to compute the eigenfunction with index $(2,3)$, then the basis functions needed to express this eigenfunction have indices $\{(2,3),(2,1),(1,2),(1,0),(0,3),(0,1)\}.$ In Figure \ref{fig:sparsity}, we show the sparsity pattern of two matrix representations of the OU operator in high dimensions. Lexicographical ordering was used in constructing these matrices. Notice that in Figure~\ref{fig:a}, the matrix has size $1080 \times 1080$; in contrast, the matrix would be of size $2160 \times 2160$ if all indices less than or equal to $\n$ were included in the basis.
Similarly, in Figure~\ref{fig:b}, the matrix has size $17280 \times 17280$ rather than $34560 \times 34560$. The matrices were constructed by brute force, but they exhibit an interesting sparsity structure: for example, in  Figure~\ref{fig:b}, only $0.12\%$ of the matrix entries are nonzero. In future work, it may be interesting to investigate computationally efficient and structure-exploiting techniques for automatically constructing these matrices. 
\begin{figure}[h]
    \centering
    \begin{subfigure}{0.46\textwidth}
        \centering
        \includegraphics[width = \textwidth]{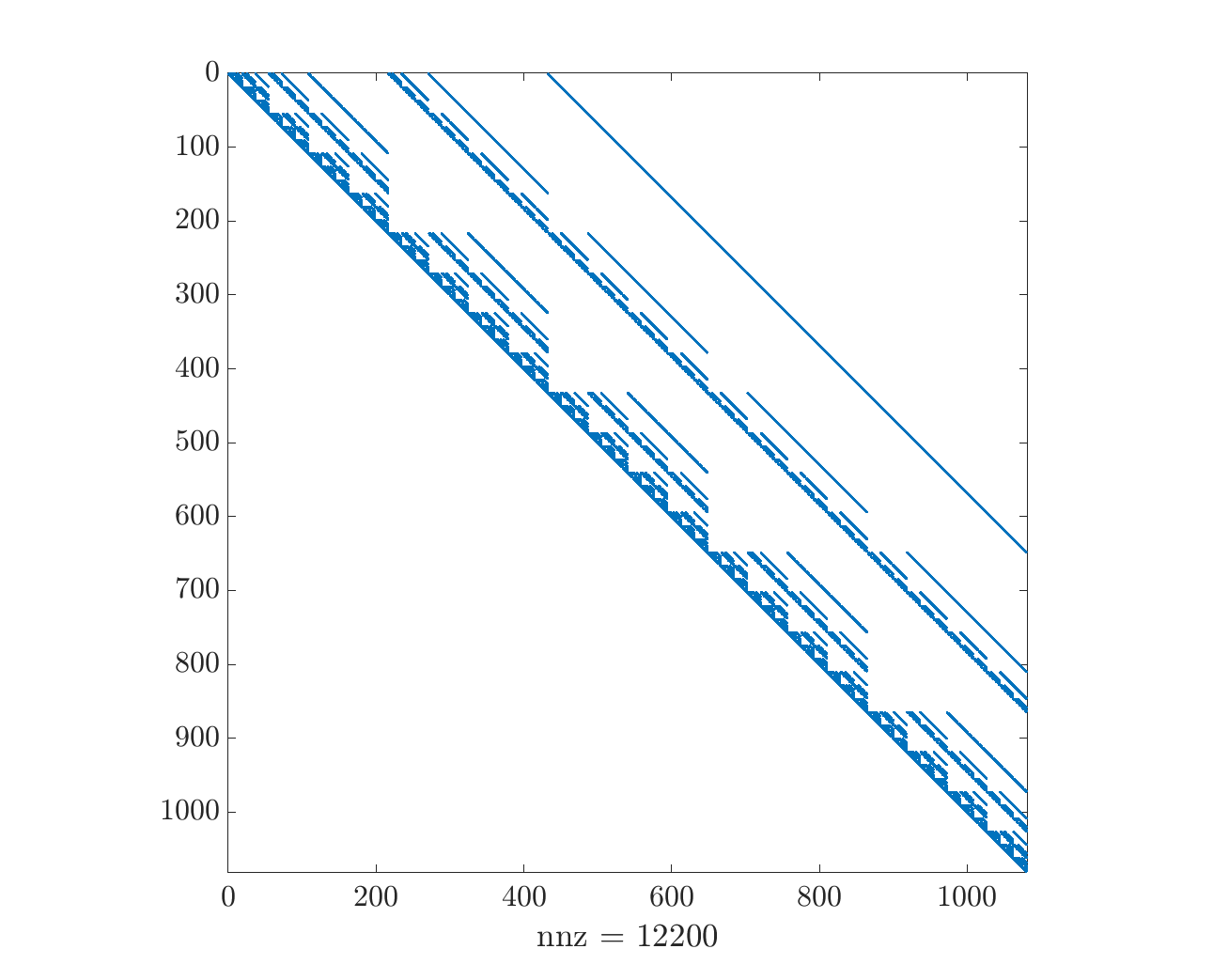}
        \caption{$d = 6$, $l' = 4$, $l = 5$, $\n = (4,3,2,2,2,3).$ The matrix is of size $1080\times 1080$.}
        \label{fig:a}
    \end{subfigure}%
    \hspace{10pt}
    \begin{subfigure}{0.46\textwidth}
        \centering
        \includegraphics[width = \textwidth]{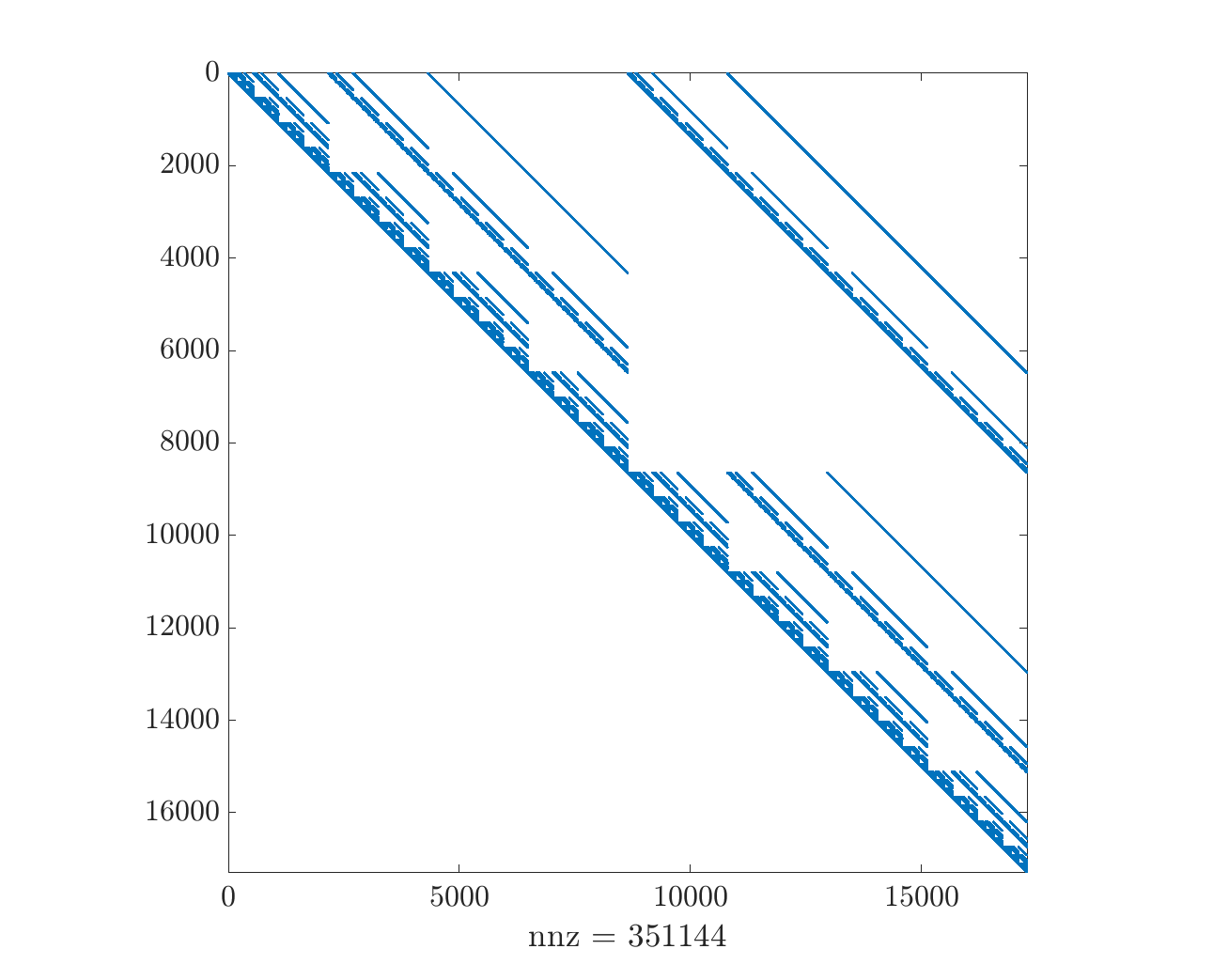}
        \caption{$d = 9$, $l' = 5$, $l = 7$, $\n = (1,3,3,2,2,1,3,4,2)$. The matrix is of size $17280 \times 17280$.}
        \label{fig:b}
    \end{subfigure}
    \caption{Sparsity patterns for two different matrix representations of $\sA$. }
    \label{fig:sparsity}
\end{figure}
As explored in \cite{leen2016eigenfunctions}, the eigenfunctions can be computed recursively via ladder operators. One could, therefore, express the ladder operators in terms of the basis we have chosen here, so that other eigenfunctions can be generated (given some initial high order eigenfunction). 

Lastly, we comment on numerical methods for solving this matrix eigenvalue problem. Recall that given an index $\n$, the corresponding eigenvalue $\mu_\n$ is known exactly by Proposition~\ref{prop1}, which means that only the eigenvectors need to be found. This means that only the nullspace of $\mathbf{M} - \mu_\n \mathbf{I}$ needs to be computed. In addition, $\mathbf{M}-\mu_\n\mathbf{I}$ is an \emph{upper triangular} matrix, which means that if $\mathbf{M}$ can be stored in memory (even in a sparse fashion), then the reduced row echelon form of the matrix can be easily computed and the nullspace can be found trivially. If only matrix-vector multiplies $\mathbf{M}v$ are accessible, the Arnoldi iteration can be employed to find the eigenvectors iteratively \cite{trefethen1997numerical}. 
 
\begin{remark}
One may ask if there is another choice of basis such that the number of terms produced by the trace term can be reduced. For example, a tempting choice is to use the basis defined in \eqref{eq:normaleigfunc}. We found that this choice yields a more complicated expression that is similar to \eqref{lincomb} without making the resulting matrix representation sparser.
\end{remark}

\begin{remark}
 Our approach is similar to that of \cite{rao2014jordan}, which computes eigenfunctions of the OU operator in the case that $\A$ is \emph{not} diagonalizable (in contrast with the present setting). More specifically, \cite{rao2014jordan} fixes a basis of polynomials (in fact, the tensorized Hermite polynomials) and seeks a finite-dimensional representation of the OU operator in that basis. However, eigenvalue problems of more than $d = 3$ dimensions were not studied. 
\end{remark}

\section{Conclusion}

We have presented a new 
approach for computing eigenfunctions of Ornstein--Uhlenbeck operators, in a general setting where the matrix $\A$ 
is diagonalizable.
We first collect results for special cases, e.g., when
$\A$ is self-adjoint or normal, and write explicit expresssions for the eigenfunctions in terms of certain orthogonal polynomials. We then address the general setting, where we show that by using a judicious choice of basis, one can compute eigenfunctions of any order, and in arbitrary dimension, by solving a sparse eigenvalue problem. The resulting matrix representation of the OU operator exhibits interesting structure that can be exploited to solve the associated eigenvalue problem efficiently.

These eigenfunctions have been found to be useful for applications such as simulating rare events \cite{zhang2021koopman} and approximating solutions to the Fokker--Planck equation \cite{leen2016eigenfunctions}. We anticipate that this approach will be relevant for many other applications.

\section*{Acknowledgments}
We thank Joshua White for contributing to the numerical experiments.

\bibliographystyle{unsrt}
\bibliography{bibliotheque.bib}

\appendix
\section{Hermite and Hermite-Laguerre-It\^o polynomials}
In this section we review the definitions of the Hermite and HLI polynomials, and some of their relevant properties.

\subsection{Hermite polynomials}
 There are many ways to define the probabilists' Hermite polynomials. The most relevant characterization for this note is the Hermite differential equation, which is an eigenvalue problem of the form
\begin{align}
	-x\phi_n'(x)+ \phi_n''(x) = \mu_n\phi_n(x).
\end{align}
The solutions to this differential equation are the Hermite polynomials $\phi_n(x) = \He_n(x)$ with eigenvalues $\mu_n = -n$ for $n\in\mathbb{N}_0.$ 

The Hermite polynomials (like any other univariate orthogonal polynomials) satisfy a three-term recurrence relation:
\begin{align}
	\He_{n+1}(x) = x\He_{n}(x)-n\He_{n-1}(x). 
\end{align}
Furthermore, derivatives of the Hermite polynomials can be expressed in terms of other, lower-order, Hermite polynomials as
\begin{align*}
	\frac{\de}{\de x}\He_n(x) = n\He_{n-1}(x). 
\end{align*}

\subsection{Hermite-Laguerre-It\^o polynomials}
The Hermite-Laguerre-It\^o (HLI) polynomials are bivariate orthogonal polynomials first studied by It\^o in his study of multiple complex-valued It\^o integrals. The definition of the HLI polynomials used in this note is from \cite{chen2014eigenfunctions}. A more comprehensive collection of the properties of these polynomials can also be found there. For integers $m,n$ and $(x,y)\in\R^2$, the polynomials are
\begin{align*}
	J_{m,n}(z,\overline{z}) = \begin{cases}
		(-1)^n n! z^{m-n} L_n^{m-n}(z\overline{z},\rho), \;\;\;\; m\ge n  \\
		(-1)^m m! \overline{z}^{n-m} L_m^{n-m}(z\overline{z},\rho),\;\;\;\; m<n
	\end{cases}
\end{align*}
where $z = x + iy$ and $L_k^\alpha(x,\rho)$ are the generalized Laguerre polynomials defined by the Rodrigues formula
\begin{align*}
	L_n^\alpha(x,\rho) = \frac{\rho^n}{n!}x^{-\alpha} e^{\frac{x}{\rho}}\frac{\de^n}{\de x^n} \left(e^{-\frac{x}{\rho}} x^{n+\alpha} \right),\;\;\;\; n\in\mathbb{N}. 
\end{align*}
The first six Hermite-Laguerre-It\^o polynomials for $\rho = 1$ are 
\begin{align*}
	&J_{0,0} = 1,\;	J_{1,0} = x+iy,\;J_{0,1} = x-iy \\
	&J_{1,1} = -(x^2+y^2)+1,\;J_{2,0} = (x+iy)^2,\;J_{0,2} = (x-iy)^2. 
\end{align*}
Like the Hermite polynomials, the derivatives of HLI polynomials can be written in terms of other HLI polynomials. Defining $z = x+iy$, we have,
\begin{align*}
	\frac{\partial}{\partial z} J_{m,n}(z,\bar{z}) = mJ_{m-1,n}(z,\bar{z}) \\
	\frac{\partial}{\partial \bar{z}}J_{m,n}(z,\bar{z}) = nJ_{m-1,n}(z,\bar{z}). 
\end{align*}
The following crucial result from \cite{chen2014eigenfunctions} shows that $J_{m,n}$ are the OU eigenfunctions
\begin{proposition}[\cite{chen2014eigenfunctions}, Theorem 2.6]
	The Hermite-Laguerre-It\^o polynomials satisfies
	\begin{align*}
		\left[ \overline{\lambda} z\frac{\partial }{\partial z} + {\lambda}\overline{z}\frac{\partial}{\partial \overline{z}} +2 \sigma^2 \frac{\partial}{\partial z\partial \overline{z}} \right] J_{m,n}(z,\bar{z}; \rho) = \mu_{m,n}J_{m,n}(z,\bar{z}; \rho)
	\end{align*}
	where $\lambda = -a + ib$, $\rho = \sigma^2/a$, and $\mu_{m,n} = -(m+n)a+i(m-n)b.$ \label{prop3}
\end{proposition}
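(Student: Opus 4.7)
The plan is to reduce the claim to identities about Laguerre polynomials via the derivative formulas for the HLI polynomials recorded just before the statement. First I would apply those formulas to rewrite the left-hand side: since $\partial_z J_{m,n} = m J_{m-1,n}$ and $\partial_{\bar z} J_{m,n} = n J_{m,n-1}$ (with the obvious convention that $J$ vanishes when any index is negative), and hence $\partial_z \partial_{\bar z} J_{m,n} = mn J_{m-1,n-1}$, the operator applied to $J_{m,n}$ becomes
\begin{equation*}
\overline{\lambda}\, m\, z\, J_{m-1,n}(z,\bar z;\rho) \;+\; \lambda\, n\, \bar z\, J_{m,n-1}(z,\bar z;\rho) \;+\; 2\sigma^2 mn\, J_{m-1,n-1}(z,\bar z;\rho).
\end{equation*}
The whole game is now to express $zJ_{m-1,n}$ and $\bar z J_{m,n-1}$ as linear combinations of $J_{m,n}$ and $J_{m-1,n-1}$ with explicit coefficients, after which the three terms combine into $\mu_{m,n} J_{m,n}$.

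For this, I would establish (by direct computation from the Rodrigues formula and the standard three-term recurrence of the generalized Laguerre polynomials) the two multiplication identities
\begin{equation*}
z\, J_{m-1,n} = J_{m,n} + \rho\, n\, J_{m-1,n-1}, \qquad \bar z\, J_{m,n-1} = J_{m,n} + \rho\, m\, J_{m-1,n-1}.
\end{equation*}
These are most easily checked in the regime $m \ge n$ using the definition $J_{m,n} = (-1)^n n! z^{m-n} L_n^{m-n}(z\bar z,\rho)$, where the identity $u\, (L_n^{\alpha})'(u,\rho) + (\alpha) L_n^\alpha(u,\rho) = \alpha L_n^{\alpha-1}(u,\rho) + (\text{recurrence terms})$ collapses the required combination; the case $m<n$ follows by the symmetry $J_{m,n}(z,\bar z) = \overline{J_{n,m}(\bar z, z)}$ built into the definition.

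Substituting these two relations into the displayed expression above yields
\begin{equation*}
(\overline{\lambda} m + \lambda n)\, J_{m,n} \;+\; \bigl[\rho\,\overline{\lambda}\, m n + \rho\,\lambda\, m n + 2\sigma^2 m n\bigr]\, J_{m-1,n-1}.
\end{equation*}
Using $\lambda + \overline{\lambda} = -2a$ and $\rho = \sigma^2/a$, the bracket collapses to $-2\sigma^2 mn + 2\sigma^2 mn = 0$, killing the $J_{m-1,n-1}$ term. The remaining coefficient $\overline{\lambda} m + \lambda n = -(m+n)a + i(m-n)b = \mu_{m,n}$ gives exactly the claimed eigenvalue.

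The main obstacle is the verification of the two multiplication-by-$z$ (respectively $\bar z$) identities, which is the one nontrivial input from the theory of Laguerre polynomials; once those are in hand the algebraic cancellation producing $\mu_{m,n}$ is immediate. An alternative, more computational route is to bypass these identities entirely and substitute $J_{m,n} = (-1)^n n! z^{m-n} L_n^{m-n}(z\bar z,\rho)$ directly into the operator, reducing the claim to the generalized Laguerre ODE $u y'' + (\alpha+1 - u/\rho) y' + (n/\rho)\, y = 0$ satisfied by $L_n^\alpha(\cdot,\rho)$; this is more tedious but entirely mechanical.
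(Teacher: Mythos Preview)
The paper does not prove this proposition; it is recorded in the appendix as a citation of Theorem~2.6 in \cite{chen2014eigenfunctions} and invoked as a black box in the main text. There is therefore no proof in the paper to compare against, and your argument supplies what the paper omits.

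Your approach is sound. The reduction via the derivative rules $\partial_z J_{m,n}=mJ_{m-1,n}$ and $\partial_{\bar z}J_{m,n}=nJ_{m,n-1}$ is correct, and the two multiplication identities $zJ_{m-1,n}=J_{m,n}+\rho\, n\, J_{m-1,n-1}$ and $\bar z J_{m,n-1}=J_{m,n}+\rho\, m\, J_{m-1,n-1}$ do hold (they follow from the standard Laguerre recurrence and are easy to spot-check on low indices, e.g.\ $zJ_{0,1}=z\bar z=J_{1,1}+\rho J_{0,0}$ once one computes $J_{1,1}=z\bar z-\rho$ from the Rodrigues formula). The cancellation of the $J_{m-1,n-1}$ coefficient via $\rho(\lambda+\overline{\lambda})+2\sigma^2=0$ is exactly right.

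One arithmetic slip to flag: with $\lambda=-a+ib$ one has $\overline{\lambda}m+\lambda n=-(m+n)a-i(m-n)b$, not $+i(m-n)b$ as you wrote. This is not a defect of your method---applying the operator directly to $J_{1,0}=z$ returns $\overline{\lambda}z$, so the mismatch reflects a sign/convention issue in the proposition as transcribed rather than in your computation.
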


\end{document}